\newtheorem{theorem}{Theorem}[section]
\newtheorem{lemma}[theorem]{Lemma}
\newtheorem{cor}[theorem]{Corollary}
\newtheorem{prop}[theorem]{Proposition}
\theoremstyle{definition}
\newtheorem{definition}[theorem]{Definition}
\theoremstyle{remark}
\newtheorem{remark}[theorem]{Remark}
\newtheoremstyle{named}{}{}{\itshape}{}{\bfseries}{.}{.5em}{#3}
\theoremstyle{named}
\newtheorem{namedtheorem}{Theorem}
\newtheorem{namedprop}{Proposition}
\begin{document}

\title{\textsc{Limit groups over coherent right-angled Artin groups are cyclic subgroup separable}}
\author{Jonathan Fruchter}
\date{}
\maketitle

\begin{abstract}
We prove that cyclic subgroup separability is preserved under exponential completion for groups that belong to a class that includes all coherent RAAGs and toral relatively hyperbolic groups; we do so by exploiting the structure of these completions as iterated free products with commuting subgroups. From this we deduce that the cyclic subgroups of limit groups over coherent RAAGs are separable, answering a question of Casals-Ruiz, Duncan and Kazachov. We also discuss relations between free products with commuting subgroups and the word problem, and recover the fact that limit groups over coherent RAAGs and toral relatively hyperbolic groups have a solvable word problem.
\end{abstract}

\tableofcontents

\section{Introduction}

In a recent paper, \cite{raags}, Casals-Ruiz, Duncan and Kazachov defined a new class of groups $\mathcal{C}$, which was carefully designed to serve as a general framework for studying limit groups over (coherent) right-angled Artin groups (RAAGs). They succeed in showing that the limit groups over any coherent right-angled Artin group $G$ embed in the $\mathbb{Z}[t]$-completion of $G$, and that this completion can be built by repeatedly extending centralisers. Casals-Ruiz, Duncan and Kazachov \cite{raags} then pose the question: are cyclic subgroups of limit groups over coherent RAAGs closed in the profinite topology? Our main purpose here is to give a positive answer to this question.
\par
\medskip 
Constructing completions of groups using extensions of centralisers is not a new idea; in the 1960s, Baumslag gave a construction of the $\mathbb{Q}$-completion of a group with unique roots, which uses extensions of centralisers. Many would argue that the solution to Tarski's question about the first order theory of non-abelian free groups epitomized the use of extensions of centralisers. As a first step towards showing that all non-abelian free groups share the same first order theory, Sela investigated the structure of limit groups (over free groups) in \cite{sela} and showed that they admit a hierarchical structure. This structure implies that limit groups embed in the $\mathbb{Z}[t]$-completion of a free group, or in other words $F^{\mathbb{Z}[t]}$ serves as a universe for limit groups over the free group $F$. Similar results were obtained by Kharlampovich and Miasnikov, and in \cite{torallim} they extended their argument to prove that the limit groups over any toral relatively hyperbolic group $G$ embed in $G^{\mathbb{Z}[t]}$ (the definition of a limit group over any group $G$ is recalled in Subsection \ref{limitgps} below).
\par 
\medskip
The latter work was carried out in the context of groups in the class $\mathrm{CSA}$: groups whose maximal abelian subgroups are malnormal. The authors showed that extending a centraliser of a $\mathrm{CSA}$ group yields a $\mathrm{CSA}$ group, which allowed them to use induction in the process of repeatedly extending centralisers. RAAGs, even coherent ones, do not necessarily lie in the class $\mathrm{CSA}$. This deficit motivated Casals-Ruiz, Duncan and Kazachov to seek a broader setting in which similarly structured proofs would work, leading them to the class $\mathcal{C}$. They show in \cite{raags} that this class $\mathcal{C}$ contains all coherent RAAGs, as well as all toral relatively hyperbolic groups. In addition, they prove that if $G$ is in the class $\mathcal{C}$ (and satisfies the technical condition $\mathrm{R}$ of Definition \ref{condR}) then $G^{\mathbb{Z}[t]}$ can be built as an iterated centraliser extension over $G$ and is fully residually $G$. They also show that if $G$ is a coherent RAAG, then every limit group over $G$ embeds in $G^{\mathbb{Z}[t]}$. They suggested that this fact, combined with the relatively simple structure of $G^{\mathbb{Z}[t]}$, ought to provide fertile ground for addressing algorithmic problems and establishing residual properties of limit groups over RAAGs, and highlighted some specific challenges of this type, which we address in this paper.
\par 
\medskip
As well as building on \cite{raags}, the results that we shall present here rely crucially on the simple observation that direct extensions of centralisers, which are the basis for the construction of $\mathbb{Z}[t]$-completions, have a much tamer structure than more general amalgamated products. By definition, if  $C_G(u)$ denotes the centraliser of $u$ in $G$, then the \emph{direct extension} of $C_G(u)$ by $C_G(u)\times B$ is the quotient of the free product $G\ast B$ by relations that force the subgroups $C_G(u)$ and $B$ to commute. This is an example of a \emph{free product with commuting subgroups}: such a product is obtained from pairs of groups $L\le G$ and $M\le H$ by forming the amalgamated free product $G\ast_L(L\times M)\ast_M H$, which is abbreviated to $\langle G, H \vert \; [L,M]=1\rangle$. We shall exploit the way in which $G^{\mathbb{Z}[t]}$ is built from extensions of centralizers to prove the following theorems, repeatedly employing a criterion developed by Loginova \cite{loginovacyc} for the cyclic subgroup separability of certain free products with commuting subgroups. When $G$ is a coherent RAAG, the special combinatorial structure of its defining graph (see Subsection \ref{raagdef}) also plays an important part in our proof.

\begin{namedtheorem}[Theorem 1]
	\label{main}
	Let $G$ be a group in the class $\mathcal{C}$ which satisfies condition $\mathrm{R}$, and let $A$ be a ring. If $G$ is cyclic subgroup separable, then the $A$-completion of $G$, $G^A$, is cyclic subgroup separable.
\end{namedtheorem}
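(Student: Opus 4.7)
The plan is to induct on the iterated centraliser extension structure of $G^A$ established in \cite{raags}. Under the standing hypothesis that $G$ lies in $\mathcal{C}$ and satisfies condition $\mathrm{R}$, the completion $G^A$ can be realised as an ascending union $G = G_0 \hookrightarrow G_1 \hookrightarrow \cdots$ with $G^A = \bigcup_\alpha G_\alpha$, where each successor
\[G_{\alpha+1} = G_\alpha \ast_{C_\alpha}(C_\alpha \times B_\alpha)\]
is a direct extension of a centraliser $C_\alpha = C_{G_\alpha}(u_\alpha)$ by an abelian group $B_\alpha$, and each $G_\alpha$ itself remains in $\mathcal{C}$ and satisfies condition $\mathrm{R}$ (so that the construction can iterate). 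I will show by (transfinite) induction that each $G_\alpha$ is cyclic subgroup separable, with the base case handled by the hypothesis on $G$.

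At a successor ordinal, I would rewrite $G_{\alpha+1}$ as the free product with commuting subgroups $\langle G_\alpha, B_\alpha \mid [C_\alpha, B_\alpha] = 1\rangle$ introduced in the excerpt, and apply Loginova's criterion \cite{loginovacyc}. The inputs required by the criterion should be: cyclic subgroup separability of $G_\alpha$ (the inductive hypothesis) and of $B_\alpha$ (which is abelian), together with control over $C_\alpha$ inside $G_\alpha$ --- most importantly, abelianness, malnormality, and appropriate closedness with respect to the profinite topology. These structural features must be extracted from $G_\alpha \in \mathcal{C}$ and condition $\mathrm{R}$.

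For the limit stage (and for passing to the full union $G^A$), I would exploit a feature of direct extensions of centralisers that is absent for general amalgams: the map $G_{\alpha+1} \to G_\alpha$ that is the identity on $G_\alpha$ and sends $B_\alpha$ to $1$ respects the only imposed relations $[C_\alpha, B_\alpha] = 1$, and hence is a well-defined retraction. Composing these retractions along the chain gives, for every $\beta$, a retraction $r_\beta : G^A \to G_\beta$. Given $g, h \in G^A$ with $g \notin \langle h\rangle$, pick $\beta$ with $g, h \in G_\beta$, invoke the inductive hypothesis in $G_\beta$ to produce a finite-index subgroup $K \le G_\beta$ containing $h$ but not $g$, and pull back along $r_\beta$: the subgroup $r_\beta^{-1}(K)$ is of finite index in $G^A$ and separates $h$ from $g$.

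The main obstacle I anticipate is the successor step: verifying that Loginova's criterion genuinely applies to each centraliser extension arising in this chain. This amounts to translating the structural features of $\mathcal{C}$ with condition $\mathrm{R}$ --- in particular malnormality of centralisers and their behaviour in the profinite topology of $G_\alpha$ --- into the precise hypotheses of the criterion, and confirming that the inductive step does not degrade these features. The rest of the argument is essentially formal: the iterated construction is already provided by \cite{raags}, and the limit step is handled uniformly by the retraction trick, which is the key observation that makes the whole inductive scheme go through.
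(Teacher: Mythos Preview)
Your proposal is essentially correct and follows the paper's approach: transfinite induction along the iterated centraliser extension structure of $G^A$, with Loginova's criterion applied at successor steps and retractions handling limit steps and the final union.

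The one place you overcomplicate matters is the successor step. Loginova's criterion (Theorem~\ref{cyc} in the paper, from \cite{loginovacyc,sokolov}) does \emph{not} require malnormality of $C_\alpha$ or direct separability of $C_\alpha$ in $G_\alpha$: it requires only that the two factors $G_\alpha$ and $B_\alpha$ be cyclic subgroup separable and that the product $\langle G_\alpha, B_\alpha \mid [C_\alpha,B_\alpha]=1\rangle$ be residually finite. The paper dispatches this last hypothesis in one line (Lemma~\ref{direct}) by invoking \cite[Theorem 4.2]{raags}: since $G_\alpha$ lies in $\mathcal{C}$, the direct centraliser extension $G_{\alpha+1}$ is fully residually $G_\alpha$, and $G_\alpha$ is residually finite by the inductive hypothesis. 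So the obstacle you anticipate dissolves, and no malnormality argument is needed. (Incidentally, separability of $C_\alpha$ \emph{is} equivalent to residual finiteness of the product by Loginova's earlier result \cite{loginova}, so your instinct was not wrong---but the paper's shortcut via full residuality avoids checking it directly.)
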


\begin{namedtheorem}[Theorem 2]
	\label{css}
	Limit groups over coherent RAAGs are cyclic subgroup separable.
\end{namedtheorem}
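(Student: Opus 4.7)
The plan is to deduce Theorem \ref{css} directly from Theorem \ref{main}, together with the two inputs from Casals-Ruiz, Duncan and Kazachov recalled in the introduction: every coherent RAAG $G$ lies in the class $\mathcal{C}$ and satisfies condition $\mathrm{R}$, and every limit group $L$ over $G$ embeds into $G^{\mathbb{Z}[t]}$. Given these facts, the proof essentially consists of three observations.

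First, I would record that every coherent RAAG is itself cyclic subgroup separable; this is a standard residual property, following for instance from the subgroup separability of coherent RAAGs, or alternatively from the fact that every RAAG is residually torsion-free nilpotent and that finitely generated torsion-free nilpotent groups have cyclic subgroups closed in the profinite topology. Second, I would apply Theorem \ref{main} with $A = \mathbb{Z}[t]$ to promote this to cyclic subgroup separability of the completion $G^{\mathbb{Z}[t]}$. Third, I would observe that cyclic subgroup separability passes to subgroups: if $K \le H$ and $H$ is cyclic subgroup separable, then for any $g,h \in K$ with $h \notin \langle g \rangle$, a finite quotient of $H$ separating $h$ from $\langle g \rangle$ restricts, via intersection with the image of $K$, to a finite quotient of $K$ doing the same, because the image of $\langle g \rangle$ in the subquotient coming from $K$ coincides with its image in $H$. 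Combining these three ingredients with the embedding $L \hookrightarrow G^{\mathbb{Z}[t]}$ produces the theorem.

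Because the deduction is this short, no obstacle of substance lives inside Theorem \ref{css} itself; the entire difficulty has been moved into Theorem \ref{main}. There the main challenge will be to show that Loginova's cyclic subgroup separability criterion for free products with commuting subgroups propagates through every stage of the iterated centraliser extension that builds $G^{\mathbb{Z}[t]}$ from $G$, and in the RAAG case this will presumably exploit the combinatorial structure of the defining graph of a coherent RAAG to guarantee that the centralisers being extended are free abelian and sit compatibly with the separability hypotheses at each step.
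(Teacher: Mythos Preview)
There is a genuine gap in your plan: a coherent RAAG $G(\Gamma)$ need not satisfy condition $\mathrm{R}$ for $A=\mathbb{Z}[t]$. Condition $\mathrm{R}$ (Definition \ref{condR}) requires that the centre of every non-abelian centraliser be a \emph{full} $A$-subgroup, i.e.\ closed under all $\mathbb{Z}[t]$-exponents. But $G(\Gamma)$ carries only its tautological $\mathbb{Z}$-action, so as soon as some centraliser $C_{G(\Gamma)}(u)$ is non-abelian --- which happens whenever a vertex has two non-adjacent neighbours --- its centre is a nontrivial free abelian group on which $t$-powers are simply undefined. Hence Theorem \ref{main} cannot be applied directly to $G(\Gamma)$, and the introduction never claims that it can; it only asserts that coherent RAAGs lie in $\mathcal{C}$.

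The paper circumvents this by replacing $G(\Gamma)$ with the graph product $G(\Gamma,\mathbb{Z}[t])$ (same underlying graph, vertex groups $\mathbb{Z}[t]$ instead of $\mathbb{Z}$): by \cite[Corollary 6.12 and Theorem 8.1]{raags} limit groups over $G(\Gamma)$ are exactly the finitely generated subgroups of $G(\Gamma,\mathbb{Z}[t])^{\mathbb{Z}[t]}$, and $G(\Gamma,\mathbb{Z}[t])$ \emph{does} lie in $\mathcal{C}$ and satisfy condition $\mathrm{R}$. The price is that one must now prove that $G(\Gamma,\mathbb{Z}[t])$ itself is cyclic subgroup separable; this is not the off-the-shelf fact ``coherent RAAGs are cyclic subgroup separable'', since $G(\Gamma,\mathbb{Z}[t])$ is an infinitely generated graph product. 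This is precisely where the chordal combinatorics you anticipated enters, and it enters Theorem \ref{css} rather than Theorem \ref{main}: one retracts onto finite full chordal subgraphs $\Delta$, takes a perfect elimination ordering of $\Delta$, and builds $G(\Delta,\mathbb{Z}[t])$ one vertex at a time as a free product with commuting subgroups, invoking Loginova's criterion (Theorem \ref{cyc}) at each step. So the deduction of Theorem \ref{css} from Theorem \ref{main} is not as short as you suggest; the graph-theoretic argument is an essential component of it.
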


In the classic (and more restrictive) setting of limit groups over free groups, Wilton showed that all finitely generated subgroups are separable \cite{wiltonhall}. However, limit groups over coherent RAAGs are not necessarily subgroup separable: the coherent RAAG given by the presentation $L=\langle x,y,z,w \vert \; [x,y]=[y,z]=[z,w]=1\rangle$, and whose defining graph is
\begin{center}
\begin{tikzpicture}
\filldraw[black] (-1.5,0) circle (2pt);
\filldraw[black] (-0.5,0) circle (2pt);
\filldraw[black] (0.5,0) circle (2pt);
\filldraw[black] (1.5,0) circle (2pt);
\draw (-1.5,0)--(1.5,0);
\end{tikzpicture}
\end{center}
is not subgroup separable \cite[Theorem 1.2]{linkgroup}.
\par 
\medskip
In the last section of the paper we shall discuss the word problem for free products with commuting subgroups. Given $L\le G$ and $M\le H$, if the word problem is solvable both in $G$ and in $H$, and the membership problem is solvable for $L$ in $G$ and for $M$ in $H$, then there is a solution to the word problem in $\langle G,H \vert \; [L,M]=1\rangle$. We shall use once again the way in which exponential completions of groups from the class $\mathcal{C}$ can be built by iterating centraliser extensions to prove the following:
\begin{namedprop}[Proposition 3]
	\label{decidableword}
	Let $G$ be a group in the class $\mathcal{C}$. If $G$ satisfies condition $\mathrm{R}$ and has a solvable word problem, then every finitely generated subgroup $H$ of $G^A$ has a solvable word problem.
\end{namedprop}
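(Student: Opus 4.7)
The strategy is to exploit the iterated centraliser extension structure of $G^A$ supplied by \cite{raags}. Under condition $\mathrm{R}$ and the assumption $G \in \mathcal{C}$, $G^A$ is the union of an ascending chain $G = G_0 \le G_1 \le G_2 \le \cdots$ in which each step is a direct centraliser extension
\[
G_{i+1} \;=\; \langle\, G_i,\; M_i \;\vert\; [C_{G_i}(u_i), M_i]=1\,\rangle,
\]
with $u_i \in G_i$ and $M_i$ an abelian group (a free $A$-module); this is a free product with commuting subgroups in the sense recalled just before the proposition. A finitely generated $H\le G^A$ uses only finitely many of the generators introduced during the construction, so $H\le G_n$ for some finite $n$, and the word problem of $H$ is then inherited by restriction from that of $G_n$. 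It therefore suffices to prove by induction on $i$ that each $G_i$ has solvable word problem.

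The base case $G_0 = G$ is the hypothesis. For the inductive step I would apply the word-problem criterion for free products with commuting subgroups (recalled just before the statement of the proposition) to $G_{i+1}$, with left factor $G_i$ carrying the subgroup $L = C_{G_i}(u_i)$ and right factor $M_i$ carrying the subgroup $M = M_i$. The four hypotheses read: (a) $G_i$ has solvable word problem, by induction; (b) $M_i$ has solvable word problem, being abelian; (c) membership of $g \in G_i$ in $C_{G_i}(u_i)$ reduces to checking whether $[g, u_i] = 1$ in $G_i$, decidable by (a); (d) membership of $M_i$ in itself is trivial. The criterion then yields a solution to the word problem in $G_{i+1}$, closing the induction; restricting this algorithm to a set of words representing elements of $H \le G_n$ solves the word problem in $H$.

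The main obstacle I would expect is algorithmic bookkeeping rather than any genuine mathematical difficulty. One needs the construction of $G^A$ in \cite{raags} to be effective enough that, given a finite list of generators of $H$ written as expressions in $G^A$, one can locate a stage $G_n$ containing all of them and, for each $i \le n$, explicitly name the element $u_i$ and the module $M_i$ used to pass from $G_i$ to $G_{i+1}$. A secondary point is that one must know each $M_i$ is abelian (so that step (b) is trivial) and that condition $\mathrm{R}$ guarantees the centralisers $C_{G_i}(u_i)$ behave well under the construction. Both of these should be built into the explicit structure of the $A$-completion established in \cite{raags}; once this uniformity is in place, the induction proceeds as above and Proposition~\ref{decidableword} follows.
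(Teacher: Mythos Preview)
Your proposal is correct and follows essentially the same route as the paper: both argue that a finitely generated $H\le G^A$ already sits inside a group obtained from $G$ by finitely many centraliser extensions, and then run an induction using Lemma~\ref{word}, with the key observation that membership in $C_{G_i}(u_i)$ reduces to testing the single commutator $[g,u_i]$ via the word problem in $G_i$. The only cosmetic difference is that the paper further reduces each step to a \emph{free} extension $G_{i+1}=\langle G_i,t_i\mid [C_{G_i}(u_i),t_i]=1\rangle$ (so the right-hand factor is $\mathbb{Z}$ rather than a general free $A$-module $M_i$), which makes hypotheses (b) and (d) in your list even more immediate but changes nothing of substance.
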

In particular, we recover the fact that limit groups over coherent RAAGs and toral relatively hyperbolic groups have a solvable word problem.
\par 
\medskip
\paragraph{Acknowledgements}
I would like to thank Martin Bridson for his warm encouragement and generous feedback.

\section{Preliminaries}

\subsection{Notations and conventions}
Given a group $G$, we denote by $Z(G)$ its center, and by $C_G(g)$ the centraliser of $g\in G$. Throughout this paper, we assume that all rings are associative, have a free abelian additive group and a multiplicative identity $1$. For such a ring $A$, the additive group generated by $1$ is denoted $\mathrm{char}(A)\cong \mathbb{Z}$ and whenever we refer to $\mathbb{Z}\subset A$ we in fact mean $\mathrm{char}(A)$. For a graph $\Gamma$, we denote by $\mathrm{V}\Gamma$ the set of vertices of $\Gamma$ and by $\mathrm{E}\Gamma \subset \mathrm{V}\Gamma \times \mathrm{V}\Gamma$ the set of edges of $\Gamma$.
\par
\medskip
We recall the notion of a \emph{free product with commuting subgroups}: let $G$ and $H$ be groups, and suppose that $L$ and $M$ are subgroups of $G$ and $H$ respectively. The \emph{free product of $G$ and $H$ with commuting subgroups $L$ and $M$} is the quotient of the free product $G \ast H$ by the normal closure of the set of relations $\{[\ell,m] \vert \; \ell \in L \text{ and } m \in M\}$. We often abbreviate and refer to this group as $\langle G,H \vert \; [L,M]=1 \rangle$.

\subsection{Subgroup separability}
A subgroup $H$ of a group $G$ is called \emph{separable} if it is an intersection of finite index subgroups of $G$. If $H$ is separable in $G$, then every $g\notin H$ can be separated from $H$ in a finite quotient of $G$, that is, there is a homomorphism $f:G\rightarrow Q$ where $Q$ is finite and $f(g)\notin f(H)$. Another elegant description of a separable subgroup is in terms of the profinite topology on $G$. Recall that in this topology, a local base of the identity in $G$ is the set of finite index normal subgroups of $G$. A local base at any $g\in G$ is then obtained by taking $g$-cosets of the finite index normal subgroups of $G$. Furthermore, every finite index normal subgroup of $G$ is also closed in this topology. Therefore, a subgroup $H\le G$ is separable if and only if it is closed in the profinite topology on $G$.

\begin{definition}
	A group $G$ is called \emph{cyclic subgroup separable} if each of its cyclic subgroups is separable.
\end{definition}

\subsection{Right-angled Artin groups and the class $\mathcal{C}$}
\label{raagdef}
We remind the reader of the definition of a right-angled Artin group:
\begin{definition}
	Let $\Gamma$ be a simple graph; the \emph{right-angled Artin group} (or in short, RAAG) $G(\Gamma)$ is the group with presentation
	\begin{equation*}
		\langle \mathrm{V}\Gamma \; \vert \;\; [v,u], \; (v,u)\in \mathrm{E}\Gamma \rangle.
	\end{equation*}
	We refer to the graph $\Gamma$ as the \emph{defining graph} of $G(\Gamma)$.
\end{definition}
Note that in the definition above we do not restrict ourselves to finite graphs. Recall that a group is called \emph{coherent} if all of its finitely generated subgroups are finitely presented. In \cite[Theorem 1]{coherent}, Droms shows that a finitely generated RAAG $G(\Gamma)$ is coherent if and only if its defining graph $\Gamma$ is \emph{chordal}: every subgraph of $\Gamma$ that is a cycle of more than $3$ vertices admits a chord, i.e., an edge that connects two vertices of the cycle. Note that if $\Gamma$ is an infinite chordal graph, then every finitely generated subgroup $H$ of $G(\Gamma)$ is a subgroup of a RAAG $G(\Gamma')$, where $\Gamma'$ is a finite and full subgraph of $\Gamma$; $\Gamma'$ is chordal, which implies that $H$ is finitely presented and therefore $G(\Gamma)$ is coherent. The class of coherent RAAGs includes free groups, free abelian groups and RAAGs which are fundamental groups of 3-manifolds (see \cite[Theorem 2]{coherent}).
\par 
\medskip
In \cite{raags}, Casals-Ruiz, Duncan and Kazachov define a new class of groups $\mathcal{C}$ which was crafted specifically to satisfy the following property: the $\mathbb{Z}[t]$-completion (see Subsection \ref{Agroup})  of a group $G$ in the class $\mathcal{C}$ can be built by iterating extensions of centralisers (see Section \ref{mainsec}), and is fully residually $G$. The definition of the class $\mathcal{C}$ is long and technical, and is beyond the scope of this paper; in this paper we do not use the definition of the class $\mathcal{C}$ directly, but rather use properties of groups in the class $\mathcal{C}$ proven in \cite{raags}. We briefly mention that a group $G$ in the class $\mathcal{C}$ is torsion-free, has unique roots and satisfies the Big Powers ($\mathrm{BP}$) property: for every $g_1,\ldots,g_k\in G$ such that $[g_i,g_{i+1}]\ne 1$, there exists a positive integer $N$ such that for every $n_1,\ldots,n_k>N$,
\begin{equation*}
	g_1^{n_1}\cdots g_k^{n_k}\ne 1.
\end{equation*}
The $\mathrm{BP}$ property is used to show that extending centralisers of $G$ yields a group which is fully residually $G$. For further details, we refer the reader to \cite[Section 3]{raags}. Free groups, free abelian groups and more generally coherent RAAGs all lie in the class $\mathcal{C}$; in addition, toral relatively hyperbolic groups (torsion-free groups which are hyperbolic relative to a set of free abelian groups) are also in $\mathcal{C}$.

\subsection{Exponential groups}
\label{Agroup}
Recall that throughout this paper we assume that rings are associative, have a free abelian additive group and a multiplicative identity $1$; as a consequence, characteristic subrings are always isomorphic to $\mathbb{Z}$. The definitions appearing below are simplified versions of the originals: for every definition which involves a ring $A$ and a subring $A_0$, we assume that $A_0=\mathrm{char}(A)\cong\mathbb{Z}$. Further detail can be found in \cite{exp1} and \cite{exp}.

\begin{definition}
	Let $A$ be a ring. A group $G$ is called an \emph{$A$-group} if there is a map $G\times A\rightarrow G$ which satisfies the following (below, $g^a$ denotes the image of $(g,a)$ under the map $G\times A \rightarrow G$):
	\begin{enumerate}
		\item $g^1=g$, $g^0=1$ and $1^a=1$ for every $g\in G$ and $a\in A$,
		\item $g^{a+b}=g^ag^b$ and $(g^a)^b=g^{ab}$ for every $g\in G$ and $a,b\in A$,
		\item $(hgh^{-1})^a=hg^ah^{-1}$ for every $g,h\in G$ and $a\in A$, and
		\item for every $g,h\in G$ and $a\in A$, if $[g,h]=1$ then $(gh)^a=g^ah^a$.
	\end{enumerate}
	We call $G$ a \emph{partial $A$-group} if there exists $P\subset G\times A$ such that $g^a$ is defined whenever $(g,a)\in P$, and all the properties above hold whenever the arguments belong to $P$.
\end{definition}
\par 
\medskip
A homomorphism $f:G\rightarrow H$ where $G$ and $H$ are $A$-groups is called an \emph{$A$-homomorphism} if $f(g^a)=(f(g))^a$ for every $g\in G$ and $a\in A$. If $H\le G$ and $G$ is a partial $A$-group we say that $H$ is a \emph{full $A$-subgroup} of $G$ if $h^a$ is defined, and lies in $H$, for every $h\in H$ and $a\in A$. Within our limited settings, an $A$-completion of a group $G$ is defined as follows:
\begin{definition}
	Let $G$ be a group. An $A$-completion of $G$ is an $A$-group $G^A$ which satisfies the following:
	\begin{enumerate}
		\item there is a homomorphism $\tau:G\rightarrow G^A$ such that no proper full $A$-subgroup of $G^A$ contains $\tau(G)$, and
		\item if $f:G\rightarrow H$ is a homomorphism and $H$ is an $A$-group, then $f$ factors via $G^A$; in other words, there exists a unique $A$-homomorphism $\overline{f}:G^A\rightarrow H$ such that $f=\overline{f}\circ\tau$.
	\end{enumerate}
\end{definition}
\par 
\medskip
By \cite[Theorems 1 and 2]{exp1}, every group admits an $A$-completion and this completion is unique (up to $A$-isomorphism). We also remark that if $G$ is abelian, then $G^A$ is abelian and coincides with $G\otimes_{\mathbb{Z}}A$ as the two groups satisfy the same universal property.
	
\subsection{Limit groups}
\label{limitgps}

Limit groups (over free groups) were first defined by Sela in \cite{sela}; this class of groups coincides with the class of finitely generated fully residually free groups, which has been extensively studied since the 1960s. In this paper we deal with limit groups over groups in the class $\mathcal{C}$, and for expository purposes, throughout this subsection, adopt the approach of Champetier and Guirardel (see \cite{limitgps}).
\par 
\medskip
Given a group $K$, a \emph{limit group} over $K$ is, simply put, a limit of finitely generated marked subgroups of $K$ in the space of marked groups. A \emph{marked group} is a pair $(G,S)$ such that $G$ is a group and $S$ is a finite generating set of $G$. We say that two marked groups $(G,\{s_1,\ldots,s_n\})$ and $(G',\{s'_1,\ldots,s'_n\})$ are isomorphic (as marked groups) if the map which sends each $s_i\in G$ to $s'_i\in G'$ extends to an isomorphism $G\rightarrow G'$. Fixing a positive integer $n$, we define $\mathcal{G}_n$ to be the set of marked groups $(G,S)$ such that $\vert S\vert =n$. 
\par 
\medskip
The set $\mathcal{G}_n$ can be viewed as a topological space, where the topology is induced by the following pseudometric: given $(G,S),(G',S')\in \mathcal{G}_n$, set $v((G,S),(G',S'))$ to be the maximal integer $N$ such that $w(S)=1$ in $G$ if and only if $w(S')=1$ in $G'$ for every word $w$ of length at most $N$. If $(G,S)$ and $(G',S')$ are isomorphic as marked groups, set $v((G,S),(G',S'))=\infty$. The distance between $(G,S)$ and $(G',S')$ in $\mathcal{G}_n$ is
\begin{equation*}
	d_n((G,S),(G',S'))=e^{-v((G,S),(G',S'))}.
\end{equation*}
\begin{definition}
	Let $G$ be a group. A group $H$ is called a \emph{limit group over $G$} if there is an integer $n$ such that $H$ is the limit of a sequence of marked groups $(H_i,S_i)\in \mathcal{G}_n$ (with respect to the topology defined above), and such that $H_i\le G$ for every $i\in \mathbb{N}$.
\end{definition}
\par 
\medskip
In many cases, and in particular the cases that interest us (namely coherent RAAGs and toral relatively hyperbolic groups), limit groups admit a simpler description. A group $G$ is called \emph{equationally Noetherian} if the following holds: given a tuple $\textbf{x}=(x_1,\ldots,x_n)$ of variables, and a set $\Sigma\subset G\ast F(\textbf{x})$ of \emph{equations over $G$}, there is a finite subset $\Sigma_0$ of $\Sigma$ such that
\begin{equation*}
\{\textbf{g}=(g_1,\ldots,g_n)\in G^n \vert \sigma(\textbf{g})=1\;\;\forall \sigma\in \Sigma\}=\{\textbf{g}=(g_1,\ldots,g_n)\in G^n \vert \sigma(\textbf{g})=1\;\;\forall \sigma\in \Sigma_0\}
\end{equation*}
where $\sigma(\textbf{g})$ is the element of $G$ obtained by replacing each $x_i$ with $g_i$. In other words, every system of equations over $G$ is equivalent to a finite subsystem. If $G$ is equationally Noetherian, then $H$ is a limit group over $G$ if and only if $H$ is finitely generated and fully residually $G$. The fact that every finitely generated fully residually $G$ group is a limit group is easy to see, and is true for any group. We sketch a proof for the converse below:
\par 
\medskip
Suppose that $(H,S)$ is the limit of a sequence $(H_i,S_i)$ in $\mathcal{G}_n$ and that $H_i\le G$ for every $i\in \mathbb{N}$. In the free group $F(S)$, let $\Sigma \subset F(S)$ be the kernel of the homomorphism $F(S)\rightarrow H$ induced by the inclusion of $S$. Since $G$ is equationally Noetherian, the system of equations $\Sigma$ is equivalent to a finite subsystem $\Sigma_0\subset \Sigma$ in $G$. As $\sigma(S)=1$ in $H$ for every $\sigma \in \Sigma_0$, $\sigma(S_i)=1$ in $H_i$ for sufficiently large $i$. This implies that the map which sends $S$ to $S_i$ extends to a homomorphism $f_i:H\rightarrow H_i$ for sufficiently large $i$. Finally, for any finite subset $E$ of $H$, $(H_i,S_i)$ is sufficiently close to $(H,S)$ in $\mathcal{G}_n$ (as long as $i$ is large enough) to ensure that $f_i:H\rightarrow H_i$ is injective on $E$.
\par 
\medskip
Groves showed in \cite[Theorem 5.16]{groves} that toral relatively hyperbolic groups are equationally Noetherian. The fact that coherent RAAGs are equationally Noetherian follows from their linearity and is mentioned in \cite{limitraags}.
\par 
\medskip
If $G$ is a coherent RAAG or a toral relatively hyperbolic group, then limit groups over $G$ admit yet another description: they are the finitely generated subgroups of the $\mathbb{Z}[t]$-completion of $G$ (see \cite[Corollary 6.12 and Theorem 8.1]{raags} and \cite[Theorems D. and E.]{torallim}). We further explore this characterisation of limit groups in the following section.

\section{$A$-completions and cyclic subgroup separability}
\label{mainsec}

Exponential completions of (certain) groups in the class $\mathcal{C}$ exhibit a fairly friendly structure: they can be built, "from the group $G$ up", by iterating \emph{extensions of centralisers}.

\begin{definition}
	Let $G$ be a group and let $u\in G$. Let $H$ be another group, and let $\varphi:C_G(u)\rightarrow H$ be an injective homomorphism such that $\varphi(u)\in Z(H)$. The \emph{extension of the centraliser $C_G(u)$ by $H$} is the group
	\begin{equation*}
		G(u,H)=G\ast_{C_G(u)=\varphi(C_G(u))}H.
	\end{equation*}
	If $\varphi(C_G(u))$ is a direct factor of $H$, then the extension is said to be \emph{direct}. If, furthermore, $H=\varphi(C_G(u))\times \mathbb{Z}$, the extension is said to be \emph{free}.
\end{definition}
\par
\medskip
Direct extensions of centralisers have a particularly nice structure. If $G(u,C_G(u)\times B)$ is a direct extension of the centraliser $C_G(u)$ by $C=C_G(u)\times B$, then
\begin{align*}
	G(u,C_G(u)\times B) & = G \ast _{C_G(u)} (C_G(u)\times B) \\
	& = G \ast _{C_G(u)} \langle C_G(u),B \vert \, [C_G(u),B]=1 \rangle \\
	& = \langle G,B \vert [C_G(u),B]=1 \rangle,
\end{align*}
or in other words $G(u,C_G(u)\times B)$ is the free product of $G$ and $B$ with commuting subgroups $C_G(u)$ and $B$.
\par
\medskip
In \cite{loginovacyc}, the author gives a criterion under which free products with commuting subgroups are cyclic subgroup separable. This was later generalized in \cite{sokolov}. 

\begin{theorem}[{\cite[Main Theorem]{loginovacyc}, \cite[Theorem 2.1]{sokolov}}]
	\label{cyc}
	Let $G$ and $H$ be cyclic subgroup separable groups and let $L\le G$ and $M\le H$. If the group with presentation $\langle G,H \vert \; [L,M]=1 \rangle$ is residually finite, then it is cyclic subgroup separable.
\end{theorem}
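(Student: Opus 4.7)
The plan is to analyse the Bass--Serre tree $T$ associated with the amalgamated decomposition $P = \langle G, H \mid [L,M] = 1 \rangle = G \ast_L (L \times M) \ast_M H$, and to handle cyclic subgroups of $P$ according to how their generators act on $T$. Fix an element $c \in P$ of infinite order (finite-order cyclic subgroups of $P$ are separable directly from residual finiteness) and an element $g \in P \setminus \langle c \rangle$; the task is to exhibit a finite quotient $\pi : P \to Q$ with $\pi(g) \notin \pi(\langle c \rangle)$.

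First I would treat the elliptic case, where $c$ is conjugate into a vertex group $G$, $H$, or $L \times M$. After replacing $c$ and $g$ by suitable conjugates, assume $c \in G$ (the $H$-case is symmetric, and the $L \times M$-case reduces to these after projecting onto the direct factors). Cyclic subgroup separability of $G$ supplies a finite quotient $\pi_G : G \to \bar G$ in which $\pi_G(c)$ generates a cyclic subgroup excluding the image of an appropriate representative of $g$. I would then choose, using cyclic subgroup separability of $H$, a finite quotient $\pi_H : H \to \bar H$, set $\bar L = \pi_G(L)$ and $\bar M = \pi_H(M)$, and assemble these into a finite quotient $P \to \bar G \ast_{\bar L} (\bar L \times \bar M) \ast_{\bar M} \bar H$. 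Residual finiteness of $P$ is then invoked to further refine this partial quotient so that $g$ escapes $\langle \pi(c) \rangle$ in the finite target.

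In the hyperbolic case, $c$ has a translation axis in $T$ and the amalgamated normal form lengths of the powers $c^n$ grow linearly in $n$. Since $g$ has a fixed normal-form length, only finitely many powers $c^n$ can match it; residual finiteness of $P$ separates $g$ from each of these individually. For all other powers, a length disparity in combination with a finite quotient of $P$ in which the image of $c$ has sufficiently large order forces $g$ out of $\langle \pi(c) \rangle$ by a pigeonhole argument on cosets modulo the kernel.

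The main obstacle I expect is the elliptic-case extension step: building a finite quotient of $P$ that simultaneously restricts to useful quotients of both $G$ and $H$. For a general amalgam $G \ast_C H$, compatible finite quotients whose restrictions to $C$ agree need not exist, and this typically obstructs cyclic separability arguments. The hypothesis $[L,M] = 1$ dissolves the obstruction: independently chosen finite quotients of $L$ inside $G$ and of $M$ inside $H$ assemble as a direct product into a finite quotient of $L \times M$ with no matching condition at all, and this is precisely the structural feature that distinguishes free products with commuting subgroups from general amalgams in this context.
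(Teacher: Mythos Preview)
The paper does not prove this theorem at all: it is quoted from Loginova and Sokolov and used as a black box. There is therefore no ``paper's own proof'' to compare your sketch against, and your write-up would have to stand on its own as an independent argument. As it stands, it does not.

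The hyperbolic case contains a genuine gap. Separating $g$ from the finitely many powers $c^{-N},\dots,c^{N}$ by residual finiteness gives you a finite quotient $\pi:P\to Q$ with $\pi(g)\ne\pi(c^i)$ for $|i|\le N$, but $\langle\pi(c)\rangle$ consists of \emph{all} $\pi(c^i)$ for $0\le i<\operatorname{ord}_Q(\pi(c))$, and you have no control over that order. Your ``pigeonhole argument on cosets'' does not close this: nothing prevents $\pi(g)=\pi(c^m)$ for some $m$ with $|m|>N$. One actually needs a quotient in which the order of the image of $c$ is bounded (or at least controlled) simultaneously with the separation of $g$ from the small powers, and producing such a quotient is exactly where the work lies.

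The elliptic case is also incomplete. You assume $c\in G$ after conjugation, but $g$ need not lie in $G$, so ``the image of an appropriate representative of $g$'' has no meaning. The intermediate group $\bar G\ast_{\bar L}(\bar L\times\bar M)\ast_{\bar M}\bar H$ you construct is generally infinite, and ``invoke residual finiteness of $P$ to further refine'' is not an argument: residual finiteness lets you separate $g$ from any \emph{single} element, not from a whole cyclic subgroup, which is the entire content of the theorem you are trying to prove. Your final paragraph correctly identifies the structural advantage of the commuting-subgroup hypothesis, but identifying where the leverage is does not substitute for actually applying it; the Loginova--Sokolov proofs build specific compatible finite quotients with order control, and that construction is missing here.
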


\par 
\medskip
This criterion will be used throughout this section; as a warm-up we prove the following lemma which easily follows from Theorem \ref{cyc}:

\begin{lemma}
	\label{direct}
	Let $G$ be a group in the class $\mathcal{C}$ and let $u\in G$ be such that $C_G(u)$ is abelian. Let $B$ be a free abelian group and write $C=C_G(u)\times B$. If $G$ is cyclic subgroup separable, then so is the direct centraliser extension $G(u,C)$.
\end{lemma}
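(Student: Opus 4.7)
The plan is to apply Theorem \ref{cyc} directly to the free product with commuting subgroups decomposition recorded just above the lemma statement, namely
\[
G(u,C) \;=\; \langle G, B \mid [C_G(u), B] = 1 \rangle.
\]
Setting $L = C_G(u) \le G$ and $M = B$, I must verify three hypotheses: that $G$ is cyclic subgroup separable (given), that $B$ is cyclic subgroup separable, and that $G(u,C)$ is residually finite.

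The second is routine: any free abelian group is cyclic subgroup separable, since $g \notin \langle h\rangle$ can be separated from $\langle h\rangle$ by projecting onto a finite-rank direct summand containing $g$ and $h$ and then reducing modulo a suitable integer.

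The substantive step is residual finiteness of $G(u,C)$. Since $G$ lies in the class $\mathcal{C}$ it satisfies the Big Powers property, and this is exactly what \cite{raags} exploits to show that direct centraliser extensions of $G$ by groups of the form $C_G(u)\times B$ (with $B$ free abelian) are fully residually $G$. Cyclic subgroup separability of $G$ implies that $G$ is residually finite, via separation of each nontrivial element from the trivial cyclic subgroup; hence, being fully residually $G$, the extension $G(u,C)$ is itself residually finite. All three hypotheses of Theorem \ref{cyc} are in place and the conclusion follows.

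The only point where anything could go wrong is the residual finiteness step, which is handled entirely by the structural results of \cite{raags}; the rest is a clean application of Loginova's criterion. This is why the lemma functions as a warm-up for the main theorem: the real content will lie in iterating this argument up a tower of centraliser extensions while maintaining both the Big Powers hypothesis and the applicability of Theorem \ref{cyc} at every level.
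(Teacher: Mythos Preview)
Your proof is correct and follows essentially the same line as the paper's: decompose $G(u,C)$ as $\langle G,B\mid [C_G(u),B]=1\rangle$, note that both factors are cyclic subgroup separable, and obtain residual finiteness of $G(u,C)$ from the fact (proved in \cite{raags}) that it is fully residually $G$, combined with the residual finiteness of $G$. The paper cites \cite[Theorem~4.2]{raags} directly rather than invoking the Big Powers property by name, but the content is the same.
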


\begin{proof}
	From the discussion preceding this lemma, $G(u,C)$ is the free product of $G$ and $B$ with commuting subgroups $C_G(u)$ and $B$. Since $B$ is free abelian, its cyclic subgroups are separable; by our assumption, $G$ is also cyclic subgroup separable. Therefore, by Theorem \ref{cyc}, it is enough to show that $G(u,C)$ is residually finite.
	\par
	\medskip
	By \cite[Theorem 4.2]{raags}, $G(u,C)$ is fully residually $G$. Since $G$ is cyclic subgroup separable it is residually finite, and hence $G(u,C)$ is residually finite.
\end{proof}

\begin{remark}
	It is worth mentioning that in an earlier paper, Loginova shows that a free product with commuting subgroups $\langle G,H \vert \; [L,M]=1 \rangle$ is residually finite if and only if $G$ and $H$ are residually finite, $L$ is separable in $G$ and $M$ is separable in $H$ (see \cite[Theorem 1]{loginova}). It follows that if $G$ is a residually finite group in the class $\mathcal{C}$, then abelian centralisers in $G$ are separable. In particular, abelian centralisers in graph towers over coherent RAAGs (see \cite[Section 7]{raags}) are separable.
\end{remark}
\par 
\medskip
The remainder of this section is devoted to proving Theorems \ref{main} and \ref{css}. We do so by analysing the construction of the $A$-completion of a group $G$ from the class $\mathcal{C}$ in steps, following \cite{raags}, and proving that each step yields a cyclic subgroup separable group. Note that by \cite[Proposition 6.1]{raags}, if $G$ is abelian then $G^A$ is abelian, and therefore cyclic subgroup separable. We therefore restrict our attention to non-abelian groups in the class $\mathcal{C}$. We also remark that in \cite{raags}, the authors assume that a non-abelian group $G\in \mathcal{C}$ satisfies an additional condition, named \emph{condition $\mathrm{R}$}, in order to show that $G^A$ enjoys the structure of an iterated centraliser extension. We make this assumption too.

\begin{definition}
	\label{condR}
	A group $G\in \mathcal{C}$ is said to satisfy \emph{condition $\mathrm{R}$} if it is a partial $A$-group, and for every $u\in G$, if $C_G(u)$ is non-abelian, then the centre $Z(C_G(u))$ of $C_G(u)$ is a full $A$-subgroup.
\end{definition}

We remind the reader that throughout this paper we assume that all rings are associative, have a free abelian additive subgroup and a multiplicative identity $1$. We also recall the statement of Theorem \ref{main} for the convenience of the reader:

\begin{namedtheorem}[Theorem 1]
	Let $G$ be a group in the class $\mathcal{C}$ which satisfies condition $\mathrm{R}$, and let $A$ be a ring. If $G$ is cyclic subgroup separable, then the $A$-completion of $G$, $G^A$, is cyclic subgroup separable.
\end{namedtheorem}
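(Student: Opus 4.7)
The plan is to establish Theorem \ref{main} by transfinite induction along the decomposition of $G^A$ as an iterated direct extension of centralisers, which \cite{raags} provides for $G \in \mathcal{C}$ satisfying condition $\mathrm{R}$. Concretely, one obtains a chain $G = G_0 \le G_1 \le \cdots \le G_\alpha \le \cdots$ of subgroups indexed by some ordinal, with $G^A$ being their union, where each successor $G_{\alpha+1}$ is a direct extension $G_\alpha(u, L \times B)$ of an abelian subgroup $L \le G_\alpha$ by a free abelian group $B$. Condition $\mathrm{R}$ enters here: if $C_{G_\alpha}(u)$ is abelian one takes $L = C_{G_\alpha}(u)$, otherwise $L = Z(C_{G_\alpha}(u))$, which is guaranteed by $\mathrm{R}$ to be a full $A$-subgroup and so a legitimate target of extension; in either case $L$ is abelian. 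The induction hypothesis is that $G_\alpha$ is cyclic subgroup separable, with base case supplied by the hypothesis on $G$.

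For the successor step, writing $G_{\alpha+1} = \langle G_\alpha, B \mid [L, B] = 1\rangle$ as in the discussion preceding Lemma \ref{direct} places us in the setting of Theorem \ref{cyc}: both $G_\alpha$ (by induction) and $B$ (being free abelian) are cyclic subgroup separable, so it suffices to verify residual finiteness of $G_{\alpha+1}$. This is provided by \cite[Theorem 4.2]{raags}, which guarantees that a direct centraliser extension of a group from the class $\mathcal{C}$ is fully residually that group; composing with a finite quotient of $G_\alpha$ (which exists by the residual finiteness implicit in its cyclic subgroup separability) yields the desired finite quotient of $G_{\alpha+1}$. This argument is a mild variant of Lemma \ref{direct}, accommodating the case in which $L$ is the centre of a non-abelian centraliser rather than the centraliser itself.

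The main obstacle will be the limit step $G_\lambda = \bigcup_{\beta<\lambda} G_\beta$, since cyclic subgroup separability need not pass through directed unions. I will circumvent this by exploiting the canonical retraction $G_{\alpha+1} \twoheadrightarrow G_\alpha$ attached to each successor step, obtained by projecting $L \times B$ onto its $L$-factor; this extends to a well-defined retraction of the amalgamated product $G_\alpha *_L (L \times B)$ onto $G_\alpha$ precisely because the projection restricts to the identity on the amalgamated subgroup $L$. Composing these successor retractions coherently through the chain yields a retraction $r_\alpha : G_\lambda \twoheadrightarrow G_\alpha$ for each $\alpha < \lambda$. Given $g, h \in G_\lambda$ with $h \notin \langle g \rangle$, both elements lie in some common $G_\alpha$ with $\alpha < \lambda$, and $h \notin \langle g \rangle$ in $G_\alpha$; by induction a finite quotient $f : G_\alpha \to Q$ separates $h$ from $\langle g \rangle$, and then $f \circ r_\alpha$ is the required separating finite quotient of $G_\lambda$. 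The technical point I will need to verify carefully is that the successor retractions compose coherently through the transfinite iteration, so that $r_\alpha$ is well-defined via the universal property of the direct limit at limit ordinals.
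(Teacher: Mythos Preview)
Your overall strategy---transfinite induction along the chain of direct centraliser extensions, invoking Theorem~\ref{cyc} at each successor stage and retractions at limit stages---is exactly what the paper does. The paper packages the argument slightly differently, first treating a single \emph{tree extension} of many centralisers at once (Lemma~\ref{tree}, proved by the very transfinite induction you describe) and then iterating the star construction countably many times; you flatten these two levels into one transfinite chain, which is equivalent.

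There is one genuine misconception to correct. You write that when $C_{G_\alpha}(u)$ is non-abelian one takes $L = Z(C_{G_\alpha}(u))$, ``which is guaranteed by $\mathrm{R}$ to be a full $A$-subgroup and so a legitimate target of extension.'' This inverts the role of condition~$\mathrm{R}$. The point of $\mathrm{R}$ is precisely that non-abelian centralisers need \emph{no} extension: since $u$ lies in $Z(C_{G_\alpha}(u))$ and the latter is already a full $A$-subgroup, $u^a$ is already defined. Accordingly, the construction in \cite{raags} (see the paragraph after Lemma~\ref{tree}) extends \emph{only abelian centralisers that are not yet full $A$-subgroups}. Your spurious non-abelian case is also the one place your argument would actually fail: $\langle G_\alpha, B \mid [Z(C_{G_\alpha}(u)),B]=1\rangle$ is not a centraliser extension in the sense of \cite{raags}, so your appeal to \cite[Theorem~4.2]{raags} for residual finiteness would not be justified there. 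Once you drop this case and restrict to abelian centralisers, your proposal matches the paper's proof.
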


The strategy behind the construction of the $A$-completion of $G$ is rather straightforward: we repeatedly extend centralisers in $G$ to obtain a group $G^*$ such that $G$ is a full $A$-subgroup of $G^*$; in other words, for every $g\in G$, the action of $A$ on $g$ within $G^*$ is defined. Iterating this construction, we eventually obtain the $A$-completion of $G$. We recall the following construction from \cite{exp} which allows us to extend multiple centralisers at once:

\begin{definition}[{\cite[Definition 8]{exp}, \cite[Definition 4.7]{raags}}]
	\label{treedef}
	Let $\mathfrak{C}=\{C_G(u_i)\}_{i\in I}$ be a set of centralisers in a group $G$ and let $\{\varphi_i:C_G(u_i)\rightarrow H_i\}_{i\in I}$ be injective homomorphisms such that $\varphi_i(u_i)\in Z(H_i)$ for every $i\in I$. Let $T$ be the tree whose vertex set is $\{v\}\cup \{v_i\}_{i\in I}$ and whose edge set is $\{e_i=(v,v_i)\}_{i \in I}$. Let $T_G$ be the graph of groups whose underlying graph is $T$, and whose vertex groups, edge groups and edge maps are as follows:
	\begin{enumerate}
		\item $G_v=G$,
		\item $G_{v_i}=H_i$,
		\item $G_{e_i}=C_G(u_i)$,
		\item the map which maps $G_{e_i}$ into $G_v$ is the inclusion, and
		\item the map which maps $G_{e_i}$ into $G_{v_i}$ is $\varphi_i$.
	\end{enumerate}
	The fundamental group of this graph of groups is called a \emph{tree extension of centralisers}. It is denoted by $G(\mathfrak{C},\mathcal{H},\Phi)$ where $\mathcal{H}=\{H_i\}_{i\in I}$ and $\Phi=\{\varphi_i\}_{i\in I}$.
\end{definition}
\par 
\medskip
We have already seen (Lemma \ref{direct}) that direct extensions of abelian centralisers of cyclic subgroup separable groups in $\mathcal{C}$ are cyclic subgroup separable. The following lemma, which relies on \cite[Proposition 4.8]{raags}, shows that the same holds for tree extensions of centralisers:

\begin{lemma}
	\label{tree}
	Let $G$ be a group in the class $\mathcal{C}$ and let $\mathfrak{C}=\{C_G(u_i)\}_{i\in I}$ be a set of abelian centralisers in $G$ such that no two of them are conjugate. For each $i\in I$, let $H_i$ be a free abelian group and let $\varphi_i:H_i\rightarrow C_G(u_i)$ be an injective homomorphism such that $H_i=\varphi_i(C_i)\times K_i$ for some $K_i\le H_i$. \\ Keeping the notation of Definition \ref{treedef}, if $G$ is cyclic subgroup separable, then the tree extension of centralisers $G(\mathfrak{C},\mathcal{H},\Phi)$ is cyclic subgroup separable.
\end{lemma}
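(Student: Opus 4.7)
The plan is to realise $G^* := G(\mathfrak{C}, \mathcal{H}, \Phi)$ as an iterated sequence of direct centraliser extensions starting from $G$ and then to apply Lemma \ref{direct} at each stage. The key structural input is \cite[Proposition 4.8]{raags}, which under the hypothesis that no two centralisers in $\mathfrak{C}$ are conjugate tells us that, when $I$ is finite, the tree extension $G^*$ can be built as a chain $G = G_0 < G_1 < \cdots < G_n = G^*$ with $G_k = G_{k-1}(u_k, H_k)$, that each $G_k$ again lies in $\mathcal{C}$, and, crucially, that $C_{G_{k-1}}(u_k)$ coincides with $C_G(u_k)$ and therefore remains abelian.

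For finite $I$ I would induct on $n = |I|$. The base case $n = 1$ is Lemma \ref{direct}. For the inductive step, assuming that $G_{n-1}$ is in $\mathcal{C}$ and is cyclic subgroup separable, the group $G_n$ is a direct extension of the abelian centraliser $C_{G_{n-1}}(u_n) = C_G(u_n)$ by the free abelian group $H_n$, which factors as $C_G(u_n) \times K_n$. Lemma \ref{direct} then applies and yields cyclic subgroup separability of $G_n$.

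For an infinite index set $I$ the idea is to reduce to the finite case via retractions. Using the presentation
\[
G^* \;=\; \langle\, G,\; \{K_i\}_{i \in I} \;\mid\; [C_G(u_i), K_i] = 1,\; i \in I \,\rangle,
\]
for each finite $F \subseteq I$ the map which is the identity on $G$ and on each $K_i$ with $i \in F$, and which sends each $K_i$ with $i \notin F$ to the identity, respects the defining relations and hence extends to a retraction $r_F \colon G^* \to G^*_F$, where $G^*_F := G(\mathfrak{C}_F, \mathcal{H}_F, \Phi_F)$ denotes the sub-tree extension indexed by $F$. By Bass--Serre theory every element of $G^*$ is supported on a finite sub-tree and so lies in some $G^*_F$. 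Given $g \in G^*$ and a cyclic subgroup $\langle h \rangle$ with $g \notin \langle h \rangle$, I would pick a finite $F$ with $g, h \in G^*_F$; by the finite case there is a finite-index normal subgroup $N$ of $G^*_F$ separating $g$ from $\langle h \rangle$, and then $r_F^{-1}(N)$ is a finite-index normal subgroup of $G^*$ doing the same.

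The main obstacle is the structural book-keeping at each inductive step, namely verifying that after extending the centralisers of $u_1, \ldots, u_{k-1}$ the centraliser of $u_k$ in the new group has not grown and is still abelian, so that the next invocation of Lemma \ref{direct} is legitimate. This is precisely what the pairwise non-conjugacy hypothesis buys via \cite[Proposition 4.8]{raags}; once it is accepted, the rest of the argument is essentially formal.
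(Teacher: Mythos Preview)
Your proposal is correct and follows the same strategy as the paper: iterate Lemma \ref{direct}, using \cite[Proposition 4.8]{raags} to ensure that the relevant centralisers do not grow, and handle the infinite case via retractions. The only difference is organisational --- the paper well-orders $I$ and runs a transfinite recursion (treating limit ordinals by taking directed limits and using the accumulated retractions), whereas you reduce the infinite case directly to finite sub-tree extensions $G^*_F$ with a single retraction; your packaging is arguably tidier but the content is the same.
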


\begin{proof}
	Let $<$ be a well-ordering of the set $I\cup\{0\}$ (assuming $0\notin I$ and $0<i$ for every $i\in I$). We construct, by recursion, a direct system of groups $\{G_i\}_{i\in I\cup \{0\}}$ over $I$, along with inclusion maps $f_{i,j}:G_i\rightarrow G_j$ (for $i<j$) and retractions $r_{j,i}:G_j\rightarrow G_i$ (for $i<j$). For readability, we refer to the maps $r_{j,i}$ as retractions, but formally we mean that $r_{j,i}(f_{i,j}(g))=g$ for every $g\in G_i$. In addition,
	\begin{enumerate}
		\item $G_0=G$,
		\item for every $j<i\in I$, the centraliser of $u_{i}$ in $G_j$ coincides with the centraliser of $u_{i}$ in $G$, that is $C_{G_j}(f_{0,j}(u_{i}))=f_{0,j}(C_G(u_{i}))$,
		\item every $G_i$ is cyclic subgroup separable,
		\item $\bigcup_{i\in I}G_i=G(\mathfrak{C},\mathcal{H},\Phi)$.
	\end{enumerate}
	Suppose first that $i\in I$ is a successor ordinal, that is $i=j+1$; suppose in addition that for every $k\le j$ the groups $G_k$ have been defined, along with the suitable inclusion maps and retractions. Set $G_i$ to be the direct extension of the centraliser $C_{G_j}(f_{0,j}(u_{i}))$ by $H_i$. Note that this is well-defined, since we assume that $C_{G_j}(f_{0,j}(u_i))=f_{0,j}(C_G(u_i))$ so $C_{G_j}(f_{0,j}(u_i))$ is a direct factor of $H_i$. By Lemma \ref{direct}, $G_i$ is cyclic subgroup separable. Let $f_{j,i}$ be the obvious inclusion map $G_j\rightarrow G_i$, and for every $k<j$ set $f_{k,i}=f_{j,i}\circ f_{k,j}$. Define the retraction $r_i,j:G_i\rightarrow G_j$ by mapping every element of $K_i$ to $f_{0,j}(u_i)$. Similarly, for every $k\le j$ set $r_{i,k}=r_{j,k}\circ r_{i,j}$. In addition, for every $k>i$, the fact that $f_{0,i}(u_k)$ is not a conjugate of any element in $C_{G_j}(f_{0,j}(u_i))$ implies that $C_{G_i}(f_{0,i}(u_k))=f_{0,i}(C_G(u_k))$. 
	\par
	\medskip
	Suppose now that $i$ is a limit ordinal, and that for every $k<i$ the groups $G_k$, along with the suitable inclusion maps and retractions, have been defined. Consider the directed system of groups $\{G_j\}_{j<i}$ and let $\overline{G}_i$ be its direct limit. Denote by $\overline{f}_{j,i}:G_j\rightarrow \overline{G}_i$ the canonical embedding of $G_j$ in $\overline{G}_i$ for $j<i$. To define retractions $\overline{r}_{i,j}:\overline{G}_i\rightarrow G_j$, consider the cofinal system $\{G_k\}_{j<k<i}$; its direct limit is $\overline{G}_i$. For every $j<k\le \ell<i$ we have the following commuting diagram:
	\begin{equation*}
	\xymatrix{
	G_k \ar[dr]^{r_{k,j}} \ar[d]_{f_{k,\ell}} & \\
	G_\ell \ar[r]_{r_{\ell,j}} & G_j
	}
	\end{equation*}
	and by the universal property of direct limits we obtain a map $\overline{r}_{i,j}:\overline{G}_i\rightarrow G_j$ along with the following commuting diagrams (for $j<k<i$):
	\begin{equation*}
	\xymatrix{
		G_k \ar[r]^{\overline{f}_{k,i}} \ar[dr]_{r_{k,j}} & \overline{G}_i \ar[d] ^{\overline{r}_{i,j}} \\
		& G_j
	}
	\end{equation*}
	Note that for $g\in G_j$,
	\begin{align*}
		\overline{r}_{i,j}(\overline{f}_{j,i}(g))&=\overline{r}_{i,j}\circ \overline{f}_{k,i} (f_{j,k}(g)) \\
		& =r_{k,j}(f_{j,k}(g))\\
		&=g
	\end{align*}
	so $\overline{r}_{i,j}$ is a retraction. In addition, the fact that
	\begin{equation*}
	C_{G_j}(f_{0,j}(u_k))=f_{0,j}(C_G(u_k))
	\end{equation*}
	for every $j<i$ and $k\ge i$ implies that $C_{\overline{G}_i}(\overline{f}_{0,i}(u_k))=\overline{f}_{0,i}(C_G(u_k))$ for every $k \ge i$. The existence of these retractions also implies that $\overline{G}_i$ is cyclic subgroup separable. Let $g,h\in \overline{G}_i$ be such that $h\notin \langle g \rangle$; there is $j<i$ and $g',h' \in G_j$ such that $\overline{f}_{j,i}(g')=g$, $\overline{f}_{j,i}(h')=h$ and $h' \notin \langle g' \rangle$. Since $G_j$ is cyclic subgroup separable, there is a map $q:G_j\rightarrow Q$ such that $q(h') \notin \langle q(g') \rangle$ and $Q$ is finite. The map $q \circ \overline{r}_{i,j}: \overline{G}_i \rightarrow Q$ separates $h$ from $\langle g \rangle$.
	\par
	\medskip
	We now define $G_i$ to be the direct extension of the centraliser $C_{\overline{G}_i}(\overline{f}_{0,i}(u_i))$ by $H_i$. By Lemma \ref{direct}, $G_i$ is cyclic subgroup separable. We also set $\overline{f}_i:\overline{G}_i\rightarrow G_i$ to be the inclusion map, and $\overline{r}_i:G_i\rightarrow \overline{G}_i$ to be the retraction which maps $K_i$ to $\overline{f}_{0,i}(u_i)$. Finally, define $f_{j,i}=\overline{f}_i\circ \overline{f}_{j,i}$ and $r_{i,j}=\overline{r}_{i,j}\circ \overline{r}_i$. The desired properties of $G_i$, the maps $f_{j,i}$ and the retractions $r_{i,j}$ can be verified as in the successor stage.
	\par
	\medskip
	The cyclic subgroup separability of $\bigcup_{i\in I}G_i=G(\mathfrak{C},\mathcal{H},\Phi)$ also follows, as in either the successor or the limit stage, depending on the order type of $\{0\}\cup I$.
\end{proof}

With Lemma \ref{tree} in our arsenal, we are ready to describe the construction of the $A$-completion of a group $G$ in $\mathcal{C}$ and prove Theorem \ref{main}. Assume in addition that $G$ satisfies condition $\mathrm{R}$; recall that, as in the paragraph preceding Theorem \ref{main}, we first construct a group $G^\ast$ which contains $G$, and such that $G$ is a full $A$-subgroup of $G^*$. We begin by choosing a set of centralisers $\mathfrak{C}(G)=\{C_G(u_i)\}_{i\in I}$ in $G$ which satisfies the following:
\begin{enumerate}
	\item every centraliser in $\mathfrak{C}$ is abelian, and \emph{not} a full $A$-subgroup of $G$,
	\item no two centralisers in $\mathfrak{C}(G)$ are conjugate, and
	\item any abelian centraliser in $G$ which is not a full $A$-subgroup is conjugate to a centraliser in $\mathfrak{C}(G)$.
\end{enumerate}
Note that the existence of a set $\mathfrak{C}$ which satisfies the conditions above is guaranteed by Zorn's Lemma. Recall that as in Subsection \ref{Agroup}, for every $C_G(u_i)\in \mathfrak{C}(G)$ we have that $C_G(u_i)^A=C_G(u_i)\otimes_\mathbb{Z}A$; in addition, $C_G(u_i)$ is a direct summand of $C_G(u_i)^A$. Setting 
\begin{equation*}
	\mathcal{H}(G)=\{C_G(u_i)^A\}_{i\in I}
\end{equation*}
and 
\begin{equation*}
	\Phi(G)=\{\varphi_i:C_G(u_i)\rightarrow C_G(u_i)^A\}_{i\in I}
\end{equation*}
where each $\varphi_i$ is the canonical embedding, we define
\begin{equation*}
	G^\ast = G(\mathfrak{C}(G),\mathcal{H}(G),\Phi(G)).
\end{equation*}
By \cite[Lemma 6.5]{raags} $G$ is a full $A$-subgroup of $G^\ast$. In addition, by \cite[Lemma 6.6]{raags}, $G^\ast$ satisfies condition $\mathrm{R}$ and we can iterate this construction. As in \cite[Subsection 6.2]{raags}, we define a directed system of groups
\begin{equation*}
	G=G^{(0)}<G^{(1)}<\cdots < G^{(n)}<\cdots
\end{equation*}
where
\begin{equation*}
	G^{(n+1)}=(G^{(n)})^\ast=G^{(n)}(\mathfrak{C}(G^{(n)}),\mathcal{H}(G^{(n)}),\Phi(G^{(n)}))
\end{equation*}
and the maps $f_{i,j}:G^{(i)} \rightarrow G^{(j)}$ are the inclusion maps. The direct limit of this system $\bigcup_{n \in \mathbb{N}}G^{(n)}$ is called an \emph{iterated centraliser extension} of $G$ by $A$, or in short an ICE of $G$ by $A$. Note that $\bigcup_{n \in \mathbb{N}}G^{(n)}$ is an $A$-group, since every $g\in G$ lies in $G^{(n)}$ for some $n$, and therefore the action of $A$ on $g$ is already defined in $G^{(n+1)}$. As a matter of fact, $\bigcup_{n \in \mathbb{N}}G^{(n)}$ is the $A$-completion of $G$ as evident in \cite[Theorem 6.3]{raags}. Theorem \ref{main} now follows:

\begin{proof}[Proof of Theorem \ref{main}]
	By Lemma \ref{tree}, $G^{(n+1)}$ retracts onto $G^{(n)}$ for every $n\in \mathbb{N}$; composing these retractions we obtain retractions $r_{n,m}:G^{(n)}\rightarrow G^{(m)}$ for every $m<n$. As in the proof of Lemma \ref{tree}, these retractions imply the existence of retractions from the direct limit $\bigcup_{n \in \mathbb{N}}G^{(n)}$ onto each $G^{(n)}$. In addition, each $G^{(n)}$ is cyclic subgroup separable.
	\par 
	\medskip
	Let $g,h\in \bigcup_{n \in \mathbb{N}}G^{(n)}$ be such that $h\notin \langle g \rangle$; $g$ and $h$ lie in some $G^{(n)}$. Since $G^{(n)}$ is cyclic subgroup separable, there is a homomorphism $q:G^{(n)}\rightarrow Q$ such that $q(h)\notin \langle q(g) \rangle$ and $Q$ is finite. The composition $q\circ r_n: \bigcup_{n \in \mathbb{N}}G^{(n)}\rightarrow Q$ separates $h$ from $\langle g \rangle$. 
\end{proof}

\begin{cor}
	Limit groups over cyclic subgroup separable toral relatively hyperbolic groups are cyclic subgroup separable.
\end{cor}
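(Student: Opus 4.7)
The plan is to mimic the deduction of Theorem 2 from Theorem 1, with coherent RAAGs replaced by toral relatively hyperbolic groups. Let $G$ be a cyclic subgroup separable toral relatively hyperbolic group and let $H$ be a limit group over $G$. Since the paper records that toral relatively hyperbolic groups lie in the class $\mathcal{C}$ and satisfy condition $\mathrm{R}$ (the relevant statements in \cite{raags} are applied freely in the preceding proof), Theorem \ref{main} applies to $G$ with $A=\mathbb{Z}[t]$: the completion $G^{\mathbb{Z}[t]}$ is cyclic subgroup separable.

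Next I would invoke the characterisation of limit groups over toral relatively hyperbolic groups as finitely generated subgroups of $G^{\mathbb{Z}[t]}$, which is exactly the content of \cite[Theorems D and E]{torallim} (and is quoted in the excerpt at the end of Subsection \ref{limitgps}). This gives an embedding $H\hookrightarrow G^{\mathbb{Z}[t]}$.

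Finally I would observe that cyclic subgroup separability passes to arbitrary subgroups: if $K$ is cyclic subgroup separable and $H\le K$, then given $h\in H$ and a cyclic subgroup $\langle g\rangle\le H$ with $h\notin \langle g\rangle$, a finite quotient $q\colon K\to Q$ with $q(h)\notin \langle q(g)\rangle$ restricts to a homomorphism of $H$ to the finite group $q(H)$ with the same separating property. Applying this to the inclusion $H\le G^{\mathbb{Z}[t]}$ completes the proof.

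None of the steps look delicate; the only point that needs to be verified carefully is that toral relatively hyperbolic groups indeed satisfy condition $\mathrm{R}$ in the sense of Definition \ref{condR}, so that Theorem \ref{main} is genuinely applicable. Everything else is essentially a citation combined with the trivial observation that cyclic subgroup separability is hereditary.
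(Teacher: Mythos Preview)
Your proposal is correct and matches the paper's proof essentially verbatim: the paper also notes that toral relatively hyperbolic groups lie in $\mathcal{C}$ and satisfy condition $\mathrm{R}$, applies Theorem~\ref{main} with $A=\mathbb{Z}[t]$, and then invokes \cite[Theorems D.\ and E.]{torallim} to identify limit groups over $G$ with finitely generated subgroups of $G^{\mathbb{Z}[t]}$. The only difference is cosmetic---the paper leaves the hereditary nature of cyclic subgroup separability implicit, whereas you spell it out.
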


\begin{proof}
	Let $G$ be a toral relatively hyperbolic group; in particular $G$ lies in $\mathcal{C}$ and satisfies condition $\mathrm{R}$. By Theorem \ref{main}, the $\mathbb{Z}[t]$-completion of $G$ is cyclic subgroup separable, and by \cite[Theorems D. and E.]{torallim} limit groups over $G$ are exactly the finitely generated subgroups of $G^{\mathbb{Z}[t]}$.
\end{proof}
\par 
\medskip
With a bit more work, we can also deduce the following:
\begin{namedtheorem}[Theorem 2]
	Limit groups over coherent RAAGs are cyclic subgroup separable.
\end{namedtheorem}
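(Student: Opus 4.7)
The plan is to deduce Theorem \ref{css} from Theorem \ref{main} applied with $A = \mathbb{Z}[t]$. Let $G = G(\Gamma)$ be a coherent RAAG (so $\Gamma$ is chordal) and let $H$ be a limit group over $G$. As recorded in Subsection \ref{limitgps} (citing \cite[Corollary 6.12, Theorem 8.1]{raags}), $H$ embeds as a finitely generated subgroup of $G^{\mathbb{Z}[t]}$. Cyclic subgroup separability is manifestly inherited by arbitrary subgroups: if $\langle k\rangle \le H \le G^{\mathbb{Z}[t]}$ and $\langle k\rangle = \bigcap_i N_i$ with each $N_i$ of finite index in $G^{\mathbb{Z}[t]}$, then $\langle k\rangle = \bigcap_i (N_i \cap H)$ in $H$ with each $N_i\cap H$ of finite index in $H$. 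Hence it suffices to show that $G^{\mathbb{Z}[t]}$ is cyclic subgroup separable. Since coherent RAAGs lie in $\mathcal{C}$ and satisfy condition $\mathrm{R}$ by the results of \cite{raags}, Theorem \ref{main} reduces this further to proving that $G$ itself is cyclic subgroup separable.

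The crux is therefore to verify cyclic subgroup separability of an arbitrary coherent RAAG $G(\Gamma)$. When $\Gamma$ is finite this is a known consequence of the residual/geometric structure of RAAGs: finitely generated RAAGs are virtually compact special by Haglund--Wise, and cyclic subgroups of RAAGs are quasi-convex with respect to the Salvetti complex, so they are profinitely separable; equivalently, one may invoke Minasyan's theorem that finitely generated RAAGs are hereditarily conjugacy separable. For a general chordal $\Gamma$, we reduce to the finite case via the canonical retractions $\rho_{\Gamma'}\colon G(\Gamma) \to G(\Gamma')$ obtained by sending every vertex outside a finite full subgraph $\Gamma' \subseteq \Gamma$ to the identity. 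Given $g\in G(\Gamma)$ and a cyclic subgroup $\langle h\rangle$ with $g \notin \langle h \rangle$, choose a finite full subgraph $\Gamma'\subseteq \Gamma$ whose vertex set contains every letter appearing in $g$ and $h$. Then $\Gamma'$ is chordal (any full subgraph of a chordal graph is chordal), the relation $g \notin \langle h\rangle$ still holds in $G(\Gamma')$, and any finite quotient of $G(\Gamma')$ separating $g$ from $\langle h \rangle$ pulls back along $\rho_{\Gamma'}$ to a finite quotient of $G(\Gamma)$ doing the same.

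Putting the two ingredients together, Theorem \ref{main} yields cyclic subgroup separability of $G^{\mathbb{Z}[t]}$, and the subgroup inheritance from the first paragraph transfers this property to $H$. The real content of the argument is external to this paper: it is the cyclic subgroup separability of finitely generated coherent RAAGs, which we import from the cubulation theory of Haglund--Wise or from Minasyan's work on conjugacy separability. Everything else --- the reduction to $G^{\mathbb{Z}[t]}$, the retraction trick to accommodate an infinite defining graph, and the final application of Theorem \ref{main} --- is formal, following the template set up in Section \ref{mainsec}.
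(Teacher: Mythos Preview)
There is a genuine gap: you assert that a coherent RAAG $G(\Gamma)$ satisfies condition~$\mathrm{R}$, but in general it does not. Condition~$\mathrm{R}$ (Definition~\ref{condR}) demands that whenever $C_G(u)$ is non-abelian, its centre $Z(C_G(u))$ is a \emph{full} $A$-subgroup, i.e.\ $h^a$ is defined for every $h\in Z(C_G(u))$ and every $a\in A=\mathbb{Z}[t]$. In the ordinary RAAG $G(\Gamma)$ (vertex groups $\mathbb{Z}$) only integer exponents are available, so as soon as some centraliser is non-abelian the condition fails. For instance, in the path RAAG $L=\langle x,y,z,w\mid [x,y]=[y,z]=[z,w]=1\rangle$ the centraliser $C_L(y)=\langle x,y,z\rangle$ is non-abelian and $Z(C_L(y))=\langle y\rangle\cong\mathbb{Z}$ is certainly not a full $\mathbb{Z}[t]$-subgroup. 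Consequently Theorem~\ref{main} does not apply to $G(\Gamma)$ and your reduction breaks down at exactly this step. (The informal sentence in Subsection~\ref{limitgps} is slightly misleading on this point; the precise citation \cite[Corollary~6.12, Theorem~8.1]{raags} places limit groups inside $G(\Gamma,\mathbb{Z}[t])^{\mathbb{Z}[t]}$, not $G(\Gamma)^{\mathbb{Z}[t]}$.)

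The paper repairs this by passing to $G(\Gamma,\mathbb{Z}[t])$, the graph product on $\Gamma$ with vertex groups $\mathbb{Z}[t]$: this group lies in $\mathcal{C}$ and \emph{does} satisfy condition~$\mathrm{R}$, so Theorem~\ref{main} applies and the problem reduces to showing that $G(\Gamma,\mathbb{Z}[t])$ is cyclic subgroup separable. Your retraction-to-a-finite-full-subgraph trick survives verbatim in this setting and reduces to $G(\Delta,\mathbb{Z}[t])$ with $\Delta$ finite chordal; but then one can no longer quote Haglund--Wise or Minasyan, since $G(\Delta,\mathbb{Z}[t])$ is not a finitely generated RAAG. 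Instead the paper exploits a perfect elimination ordering of $\Delta$ to exhibit $G(\Delta,\mathbb{Z}[t])$ as an iterated free product with commuting subgroups and invokes Loginova's criterion (Theorem~\ref{cyc}) inductively. If you want to salvage your argument, the cleanest fix is to replace $G(\Gamma)$ by $G(\Gamma,\mathbb{Z}[t])$ throughout and supply a direct proof that the latter is cyclic subgroup separable.
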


\begin{proof}
	Let $G(\Gamma)$ be a coherent RAAG. By \cite[Corollary 6.12 and Theorem 8.1]{raags}, limit groups over $G(\Gamma)$ are exactly the finitely generated subgroups of $G(\Gamma,\mathbb{Z}[t])^{\mathbb{Z}[t]}$ (where $G(\Gamma,\mathbb{Z}[t])$ is the graph product whose underlying graph is $\Gamma$, and whose vertex groups are all $\mathbb{Z}[t]$). In light of Theorem \ref{main}, since $G(\Gamma,\mathbb{Z}[t])$ lies in $\mathcal{C}$ and satisfies condition $\mathrm{R}$, it is enough to show that $G(\Gamma,\mathbb{Z}[t])$ is cyclic subgroup separable. Let $g,h\in G(\Gamma,\mathbb{Z}[t])$ be such that $h\notin \langle g \rangle$; there is a finite full subgraph $\Delta$ of $\Gamma$ such that $g,h\in G(\Delta,\mathbb{Z}[t])$. Note that $G(\Gamma,\mathbb{Z}[t])$ retracts onto $G(\Delta,\mathbb{Z}[t])$ by killing each vertex group $G_v$ for $v\notin \mathrm{V}\Delta$. Hence it is sufficient to show that $G(\Delta,\mathbb{Z}[t])$ is cyclic subgroup separable for every finite full subgraph $\Delta$ of $\Gamma$. 
	\par
	\medskip
	Since $\Gamma$ is chordal, so is $\Delta$. It is a famous result that every finite chordal graph, and in particular $\Delta$, admits a \emph{perfect elimination ordering} (see \cite{chordal}), that is an ordering $(v_1,v_2,\ldots,v_n)$ of $\mathrm{V}\Delta$ such that the following holds: for every $i$, the neighbours of $v_i$ amongst $v_1,v_2,\ldots,v_{i-1}$ form a clique. We show by induction on $n$ that $G(\Delta, \mathbb{Z}[t])$ is cyclic subgroup separable. For every $i\le n$ denote by $\Delta_i$ the full subgraph of $\Delta$ whose vertices are $v_1,v_2,\ldots,v_i$; denote by $G_i$ the copy of $\mathbb{Z}[t]$ which corresponds to the vertex $v_i$ of $\Delta$. For $n=1$, $G(\Delta_1,\mathbb{Z}[t])$ is free abelian and therefore cyclic subgroup separable. Suppose now that $G(\Delta_i,\mathbb{Z}[t])$ is cyclic subgroup separable and that the neighbours of $v_{i+1}$ in $\Delta$ are $v_{i_1},\ldots,v_{i_k}$. We have that
	\begin{align*}
		G(\Delta_{i+1},\mathbb{Z}[t])&=\langle G, G_{i+1} \vert \; [G_{i+1},G_{i_j}],\; j=1,\ldots,k \rangle \\
		& =\langle G, G_{i+1} \vert \; [G_{i+1},\langle G_{i_1},\ldots,G_{i_k}\rangle] \rangle
	\end{align*}
	where the last equality follows from the fact that $v_{i_1},\ldots,v_{i_k}$ form a clique in $\Delta_i$ and therefore $\langle G_{i_1},\ldots,G_{i_k}\rangle=G_{i_1}\times \cdots \times G_{i_k}$. In other words, $G(\Delta_{i+1},\mathbb{Z}[t])$ is the free product of $G(\Delta_i,\mathbb{Z}[t])$ and $G_{i+1}$ with commuting subgroups $\langle G_{i_1},\ldots,G_{i_k}\rangle$ and $G_{i+1}$. Since $G(\Delta_i,\mathbb{Z}[t])$ and $G_{i+1}$ are cyclic subgroup separable and since $G(\Delta_{i+1},\mathbb{Z}[t])$ is residually finite as the graph product of residually finite groups (see, for example, \cite{graphprod}), it follows that $G(\Delta_i,\mathbb{Z}[t])$ is cyclic subgroup separable by Theorem \ref{cyc}.
\end{proof}

\section{Free products with commuting subgroups and the word problem}

It is well-known that an amalgamated product $G\ast_K H$ admits a solution to the word problem if the word problem is solvable in $G$ and in $H$ and there is a solution to the membership problem for $K$ in both $G$ and $H$. A similar statement can be made for free products with commuting subgroups:
\begin{lemma}
	\label{word}
	Let $G$ and $H$ be groups with a solvable word problem and let $L\le G$ and $M\le H$. Suppose that the membership problem is solvable for $L$ in $G$ and for $M$ in $H$. Then there is a solution to the word problem in $\langle G,H \vert \; [L,M]=1\rangle$.
\end{lemma}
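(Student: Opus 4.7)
The plan is to recognise $\langle G, H \mid [L, M] = 1 \rangle$ as the iterated amalgamated free product $K = G \ast_L (L \times M) \ast_M H$, and then to apply, twice, the classical principle that an amalgamated free product $A \ast_C B$ has solvable word problem whenever $A$ and $B$ do and the membership problem for $C$ is solvable in both factors. This principle is a standard consequence of the Bass--Serre / Britton normal-form theorem for amalgamated products, and it reduces the lemma to establishing word- and membership-problem algorithms in several intermediate groups.

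First I would verify that $L \times M$ has solvable word problem. A word in the generators of $L$ (which are generators of $G$) and of $M$ (which are generators of $H$) can be rewritten as $\ell \cdot m$ by repeated use of the commutation relation $[L,M]=1$; it represents the identity if and only if $\ell = 1$ in $G$ and $m = 1$ in $H$, each decidable. The same rewriting yields membership algorithms for $L$ in $L \times M$ (check $m = 1$) and for $M$ in $L \times M$ (check $\ell = 1$). With this, the amalgamated-product criterion applies to $K_1 := (L \times M) \ast_M H$: the word problems in the two factors are available, membership of $M$ in $H$ is a hypothesis, and membership of $M$ in $L \times M$ was just handled.

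Finally I would apply the criterion again to $K = G \ast_L K_1$. The word problem in $G$ is a hypothesis, the word problem in $K_1$ was just established, and membership of $L$ in $G$ is a hypothesis. The remaining and most delicate ingredient is \emph{membership of $L$ in $K_1$}: given a word $w$ representing an element of $K_1$, I plan to run the normal-form reduction of $(L \times M) \ast_M H$ implicit in the previous step, which rewrites $w$ as an alternating product of coset representatives for the two vertex groups modulo $M$. Then $w \in L$ if and only if this normal form collapses to a single syllable in the vertex group $L \times M$ whose $M$-coordinate is trivial, and both conditions are decidable by the tools already assembled.

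The main obstacle is exactly this last point: one needs to extract from the standard word-problem algorithm for $K_1$ slightly more information than just triviality, namely the identity of the vertex subgroup (if any) in which the element lies and, modulo that, its coordinates. This is genuinely present in the classical normal-form proof but must be made explicit. Once the membership algorithm for $L$ in $K_1$ is in hand, chaining the three amalgamated-product algorithms produces the desired word-problem solution for $K$.
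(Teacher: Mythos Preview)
The paper actually states Lemma~\ref{word} without proof: it simply records the well-known criterion for amalgamated products in the sentence preceding the lemma and then says ``A similar statement can be made for free products with commuting subgroups''. Combined with the paper's earlier identification of $\langle G,H\mid [L,M]=1\rangle$ with the iterated amalgam $G\ast_L(L\times M)\ast_M H$, the intended argument is precisely the one you give: apply the amalgamated-product criterion twice along this decomposition. Your proposal is correct and is exactly the proof the paper leaves implicit; your care in isolating the one nonautomatic step---solving membership of $L$ in $K_1=(L\times M)\ast_M H$ via the normal-form reduction---is appropriate, and your sketch of that step is sound (an element of $K_1$ lies in $L$ iff its reduced form has length at most one, lands in the factor $L\times M$, and has trivial $M$-component, all of which are decidable with the tools assembled).
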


Recall that a \emph{free centraliser extension} is a centraliser extension of the form $G(u,C_G(u)\times\mathbb{Z})=\langle G,t \vert \; [C_G(u),t]=1\rangle$,
where $u\in G$. Using Lemma \ref{word} above, we obtain:

\begin{namedprop}[Proposition 3]
	Let $G$ be a group in the class $\mathcal{C}$. If $G$ satisfies condition $\mathrm{R}$ and has a solvable word problem, then every finitely generated subgroup $H$ of $G^A$ has a solvable word problem.
\end{namedprop}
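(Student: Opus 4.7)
The plan is to exploit the presentation $G^A = \bigcup_{n \in \mathbb{N}} G^{(n)}$ as an iterated centraliser extension from the previous section, and to propagate solvability of the word problem through each direct centraliser extension via Lemma \ref{word}.

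First I would observe that for any group $G'$ with solvable word problem and any $u \in G'$, membership in $C_{G'}(u)$ is decidable: $g \in C_{G'}(u)$ if and only if $[g,u]=1$, which is an instance of the word problem in $G'$. By Lemma \ref{word}, any direct centraliser extension
\[
G'(u, C_{G'}(u) \times B) = \langle G', B \mid [C_{G'}(u), B] = 1 \rangle,
\]
with $B$ a free abelian group of finite rank, therefore has solvable word problem: both $G'$ and $B$ have solvable word problem, and the relevant membership problems (for $C_{G'}(u)$ in $G'$, and trivially for $B$ in $B$) are decidable. This is the basic inductive step.

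Next I would argue that every finitely generated subgroup $H$ of $G^A$ is contained in an intermediate group $E$ obtained from $G$ by a finite sequence of direct centraliser extensions of the above form. Each generator of $H$ lies in some $G^{(n)}$, and descending through the tower, each step $G^{(n-1)} < G^{(n)}$ is a tree extension $G^{(n-1)}(\mathfrak{C}(G^{(n-1)}), \mathcal{H}(G^{(n-1)}), \Phi(G^{(n-1)}))$ whose vertex groups $H_i = C_{G^{(n-1)}}(u_i) \otimes_{\mathbb{Z}} A$ split as $C_{G^{(n-1)}}(u_i)$ plus a free abelian complement. Standard Bass--Serre normal-form arguments allow me to localise each generator of $H$ to a finite sub-tree extension using only finitely generated direct summands of the $H_i$'s. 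Such a finite sub-tree extension unfolds as a finite composition of direct extensions, and taking the union over the finitely many generators of $H$ produces the desired $E$.

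Induction on the number of direct extensions used to build $E$, combined with the first observation, then shows that $E$ has solvable word problem; since the generators of $H$ are presented as explicit elements of $E$, the word problem in $H$ reduces to that of $E$ and is therefore solvable. The main technical obstacle is the localisation step: tree extensions in the ICE construction may involve infinitely many branches and the vertex groups $H_i$ may be infinitely generated as abelian groups, so careful bookkeeping is needed to use the finiteness of each element of $G^A$ (as a word in the accumulated generators) in order to reduce to a finite, effective tower of direct centraliser extensions to which the inductive step applies.
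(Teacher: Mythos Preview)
Your proposal is correct and follows essentially the same route as the paper: reduce membership in a centraliser to an instance of the word problem, then induct along a finite tower of centraliser extensions using Lemma~\ref{word}. The only cosmetic difference is that the paper phrases the reduction in terms of \emph{free} centraliser extensions (adjoining one $\mathbb{Z}$ at a time) rather than direct extensions by finite-rank abelian groups, and simply asserts the localisation step you flag as the main technical obstacle.
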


\begin{proof}
	The fact that $H$ is finitely generated and embeds in $G^A$ implies that $H$ embeds in a group obtained from $G$ by taking finitely many free extensions of centralisers; that is, there are groups $G_0,G_1,\ldots,G_n$ such that $G_0=G$, $G_{i+1}=\langle G_i, t_i \vert \; [C_{G_i}(u_i),t_i]=1\rangle$ for some $u_i \in G_i$ and $H\le G_n$. 
	\par
	\medskip
	We prove that $G_n$, and hence $H$, has a decidable word problem by induction on $n$. Suppose that $G_i$ has a solvable word problem. By Lemma \ref{word}, a solution to the membership problem for $C_{G_i}(u_i)$ in $G_i$ would imply that $G_{i+1}$ has a solvable word problem. But checking whether $g\in G_i$ lies in $C_{G_i}(u_i)$ is equivalent to asking whether $[g,u_i]=1$ in $G_i$, which is solvable by the induction hypothesis. Hence the word problem in $G_{i+1}$ is solvable, which completes the proof.
\end{proof}

\begin{cor}
	Limit groups over coherent RAAGs and toral relatively hyperbolic groups have a solvable word problem.
\end{cor}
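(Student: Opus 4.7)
The plan is to apply Proposition 3 once in each of the two settings, using the embeddings of limit groups into exponential completions that were recalled in the proof of Theorem 2 and in the corollary preceding it.

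For a toral relatively hyperbolic group $G$, the group $G$ already lies in $\mathcal{C}$ and satisfies condition $\mathrm{R}$, as recorded earlier in the paper, and its word problem is solvable by standard results on (relatively) hyperbolic groups. Since limit groups over $G$ are precisely the finitely generated subgroups of $G^{\mathbb{Z}[t]}$ by \cite[Theorems D. and E.]{torallim}, Proposition 3 applied with $A=\mathbb{Z}[t]$ immediately yields a solution to the word problem in every such limit group.

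For a coherent RAAG $G(\Gamma)$, one uses \cite[Corollary 6.12 and Theorem 8.1]{raags} to realise limit groups over $G(\Gamma)$ as the finitely generated subgroups of $G(\Gamma,\mathbb{Z}[t])^{\mathbb{Z}[t]}$, where $G(\Gamma,\mathbb{Z}[t])$ is the graph product over $\Gamma$ whose vertex groups are all copies of $\mathbb{Z}[t]$. The proof of Theorem 2 already observed that this graph product lies in $\mathcal{C}$ and satisfies condition $\mathrm{R}$, so the only remaining hypothesis of Proposition 3 to verify is that $G(\Gamma,\mathbb{Z}[t])$ has a solvable word problem. This reduces to two standard ingredients: the additive group of $\mathbb{Z}[t]$ is free abelian on the recursive basis $\{1,t,t^{2},\ldots\}$, so its word problem is transparent; and the graph product normal form (applied to the finitely many vertex groups supporting a given word) extends this to $G(\Gamma,\mathbb{Z}[t])$. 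Proposition 3 then delivers the coherent RAAG case.

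The only point that needs care is the last one: because $G(\Gamma,\mathbb{Z}[t])$ is not finitely generated, the phrase \emph{solvable word problem} must be interpreted as decidability of triviality for finite words in a fixed recursive generating set. With this reading the graph product normal form suffices, so no genuinely new obstacle arises, and both cases of the corollary follow by direct assembly of Proposition 3 with the embedding theorems from \cite{raags} and \cite{torallim}.
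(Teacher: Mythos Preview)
Your proposal is correct and follows essentially the same route as the paper: both cases are handled by applying Proposition 3 to the relevant completion, after invoking the embedding theorems from \cite{torallim} and \cite{raags} and verifying that the base group lies in $\mathcal{C}$, satisfies condition $\mathrm{R}$, and has solvable word problem. The paper is terser---it cites \cite{toralword} for the toral case and simply asserts the existence of a word-problem algorithm for $G(\Gamma,\mathbb{Z}[t])$---whereas you spell out the graph-product normal form argument and flag the infinite-generation caveat, but the logical structure is identical.
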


\begin{proof}
	If $G(\Gamma)$ is a coherent RAAG and $H$ is a limit group over $G(\Gamma)$, then by \cite[Theorem 8.1]{raags} $H$ is a finitely generated subgroup of $G(\Gamma,\mathbb{Z}[t])^{\mathbb{Z}[t]}$. The group $G(\Gamma,\mathbb{Z}[t])$ lies in $\mathcal{C}$, satisfies condition $\mathrm{R}$ and admits an algorithm which checks whether a given word in the canonical generators is trivial or not. Similarly, if $G$ is toral relatively hyperbolic and $H$ is a limit group over $G$, then by \cite[Theorems D. and E.]{torallim} $H$ is a finitely generated subgroup of $G^{\mathbb{Z}[t]}$. $G$ satisfies condition $\mathrm{R}$ and by \cite[Subsection 2.7]{toralword} has a solvable word problem.
\end{proof}

The fact that limit groups over coherent RAAGs have a decidable word problem was already mentioned in \cite{raags}, and follows from these groups being finitely presented and residually finite. The following proposition, which we record here for the sake of completeness, gives a solution to the word problem for limit groups over toral relatively hyperbolic groups (toral relatively hyperbolic groups are equationally Noetherian \cite[Theorem 5.16]{groves} and limit groups over toral relatively hyperbolic groups are recursively presented since they embed in finitely presented groups):

\begin{prop}
	Let $G$ be an equationally Noetherian group. If $G$ has a solvable word problem, then so does every finitely generated, recursively presented, residually $G$ group.
\end{prop}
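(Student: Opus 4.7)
The plan is to exhibit a decision procedure for the word problem in $H$ by running two semi-decision procedures in parallel. Fix a finite generating set $S = \{s_1, \ldots, s_n\}$ for $H$ and a recursive enumeration $R = \{r_1, r_2, \ldots\}$ of its relators. Given a word $w$ in $S$, the first procedure enumerates the normal closure of $R$ in the free group $F(S)$ and halts upon producing $w$; since $R$ is recursively enumerable, this is a recursively enumerable process that terminates if and only if $w = 1$ in $H$.

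The second procedure semi-decides $w \ne 1$ in $H$, using the residually $G$ hypothesis together with the equational Noetherianity of $G$. Since $H$ is residually $G$, one has $w \ne 1$ in $H$ if and only if there exists a tuple $(g_1, \ldots, g_n) \in G^n$ satisfying $r(g_1, \ldots, g_n) = 1$ in $G$ for every $r \in R$ (so that $(g_i)$ determines a homomorphism $H \to G$) and $w(g_1, \ldots, g_n) \ne 1$ in $G$. Equational Noetherianity ensures that this infinite system of equations over $G$ is equivalent to a finite subsystem $R_0 \subseteq R$, so the set of homomorphisms $H \to G$ is cut out by only finitely many equations. The procedure then enumerates pairs consisting of a finite subset $R' \subseteq R$ drawn from the recursive enumeration, together with a tuple $(g_i) \in G^n$, and uses the solvable word problem in $G$ to check whether $r(g_i) = 1$ for all $r \in R'$ and $w(g_i) \ne 1$ in $G$; a genuine homomorphism $\phi : H \to G$ with $\phi(w) \ne 1$ provides such a pair for every finite $R' \subseteq R$, so the enumeration indeed terminates whenever $w \ne 1$ in $H$.

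The main subtlety, and the step I expect will require the most care, is to rule out false positives in the second procedure: a pair $(R', (g_i))$ can a priori satisfy the required conditions without $(g_i)$ extending to a homomorphism of all of $H$, so it need not witness $w \ne 1$ in $H$. The plan to handle this is to interleave the two procedures and verify each candidate tuple from the second procedure against further relators enumerated from $R$. The key observation is that if $w = 1$ in $H$, then the first procedure eventually locates a proof of triviality involving relators $r_{i_1}, \ldots, r_{i_k}$; once these are included in $R'$, the word $w$ is a consequence of $R'$, and no tuple satisfying $r(g_i) = 1$ for $r \in R'$ can simultaneously have $w(g_i) \ne 1$ in $G$. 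In the complementary case $w \ne 1$, a tuple $(\phi(s_i))$ coming from a separating homomorphism $\phi$ survives every such verification, so the second procedure terminates on it. Running the two procedures in parallel and following the verdicts of verified candidates therefore produces the desired decision algorithm.
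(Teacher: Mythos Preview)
Your two-procedure strategy is the same as the paper's, and your first procedure is fine. The gap is in your handling of the second procedure. You correctly spot the false-positive issue: a pair $(R',(g_i))$ passing the finite check need not come from a genuine homomorphism. But your proposed fix---continue testing each surviving candidate against further relators---does not yield a terminating procedure in the case $w\ne 1$. A tuple $(\phi(s_i))$ coming from a separating homomorphism does indeed pass every finite check, but ``survives every verification'' is an infinitary condition: at no finite stage can the algorithm certify that the tuple will continue to survive, so it never halts with the verdict $w\ne 1$. Conversely, if you let the procedure halt as soon as \emph{some} finite $R'$ is passed, you are back to the false-positive problem when $w=1$. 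Interleaving with procedure~1 does not rescue this, since a false positive can appear long before procedure~1 finds its proof.

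The point you are missing---and the one the paper uses---is that equational Noetherianity is applied \emph{non-constructively}. One does not search for $R_0$; one merely knows that some finite $R_0\subseteq R$ with $V(R)=V(R_0)$ \emph{exists}, and then hardcodes it into the algorithm. With $R_0$ fixed in advance, ``does $(g_i)$ extend to a homomorphism $H\to G$?'' becomes the finite check ``is $r(g_i)=1$ for each $r\in R_0$?'', which is decidable using the word problem in $G$. The second procedure is then: enumerate tuples $(g_i)\in G^n$, and for each one decide whether it defines a homomorphism and whether $w(g_i)\ne 1$; halt with $w\ne 1$ when both hold. This genuinely semi-decides $w\ne 1$, with no false positives. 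The resulting algorithm depends on $H$ (through $R_0$), but the proposition only asserts that an algorithm exists for each such $H$, not that it is uniform.
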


\begin{proof}
	Let $H$ be a finitely generated, recursively presented, residually $G$ group. We execute the following two algorithms in parallel: first, since $H$ is recursively presented there is an algorithm which takes a word $g\in H$ as its input, and returns 'yes' if $g=1$. 
	\par
	\medskip
	Second, let $S$ be a finite generating set of $H$ and let $g$ be a word in the alphabet $S \cup S^{-1}$. The equational Noetherianity of $G$ implies that  there is an algorithm which checks, within finite time, whether a map $S\rightarrow G$ extends to a homomorphism $f:H\rightarrow G$. Given such a homomorphism $f$, using a solution to the word problem in $G$ the algorithm can further verify whether or not $f(g)\ne 1$. Since $H$ is residually $G$, the algorithm described will return 'no' whenever $g\ne 1$ in $H$.
\end{proof}

\bibliographystyle{amsalpha}
\providecommand{\bysame}{\leavevmode\hbox to3em{\hrulefill}\thinspace}
\providecommand{\MR}{\relax\ifhmode\unskip\space\fi MR }
\providecommand{\MRhref}[2]{%
  \href{http://www.ams.org/mathscinet-getitem?mr=#1}{#2}
}
\providecommand{\href}[2]{#2}

\par 
\medskip 

\textsc{Mathematical Institute, University of Oxford, Oxford OX1 3LB, England}
\par
\smallskip
\textit{E-mail address}: \texttt{fruchter@maths.ox.ac.uk}

\end{document}